\newtheorem{thm}{Theorem}[section]
\newtheorem{theorem}{Theorem}[section]
\newtheorem{pro}[thm]{Proposition}
\newtheorem{remark}[thm]{Remark}
\newtheorem{lemma}[thm]{Lemma}
\newtheorem{defi}[thm]{Definition}
\title[]{Maximization of higher order eigenvalues and applications}
\author[n. nadirashvili]{nikolai nadirashvili}
\author[Y. Sire]{Yannick SIRE}
\begin{document}
\maketitle
\begin{abstract}
The present paper is a follow up of our paper \cite{nS}. We investigate here the maximization of higher order eigenvalues in a conformal class on a smooth compact boundaryless Riemannian surface. Contrary to the case of the first nontrivial eigenvalue as shown in \cite{nS}, bubbling phenomena appear. 
\end{abstract}

\tableofcontents
\section{Introduction}

Let $(M,g)$ be a smooth connected compact Riemannian surface without boundary. In this paper, we extremalize higher order eigenvalues in a suitably defined conformal class. If we denote $-\Delta_g$ the Laplace-Beltrami operator on $(M,g)$, the spectrum of $-\Delta_g$ consists of the sequence $\left \{ \lambda_k(g) \right \}_{k \geq 0}$ and satisfies 
$$\lambda_0(g)=0 <\lambda_1(g) \leq \lambda_2(g) \leq ... \leq \lambda_k(g) \leq ...$$

If we assume that the area $A_g(M)$ of $M$ with respect to the metric $g$ is normalized by one then by the fundamental result of Korevaar (see \cite{korevaar} and also \cite{YangYau}), it follows that every $\lambda_k(g)$ for a given $k \geq 0$ has a universal bound depending on the topological type of $M$, over all the metrics $g$ with normalized area. 

The contributions of the present paper are then related to the following extremal problem 
$$\Lambda_k(M)=\sup_{g} \lambda_k(g) A_g(M)$$
where the supremum is taken over all smooth Riemannian metrics $g$ on the manifold $M$. In \cite{nS}, we investigated this problem for $k=1$, i.e. the first nontrivial eigenvalue. Here we address higher order eigenvalues $k \geq 2$.  

We denote by $\lambda_k(g)$ the $k-$eigenvalue of $-\Delta_g$ and we have by the Courant-Hilbert formulas
$$\lambda_k(g)=\max_{U,\, dim(U)=k}\inf_{ u \in U} R_{M,g}(u)$$

where $R_{M,g}(u)$ is the so-called Rayleigh quotient given by 
$$R_{M,g}(u)=\frac{\int_M |\nabla u|^2 dA_g}{\int_M u^2 dA_g}$$

and the infimum is taken over the space 
$$U \subset \left \{ u \in H^1(M),\,\,\,\,\int_M u=0 \right \}. $$

In the previous definition of higher order eigenvalues, the metric is assumed to be smooth. In our case, we will see that the extremal metric is {\sl not} smooth. However, we will construct this metric as a limit of suitably smooth metrics and the associated higher order extremal eigenvalue will be defined in a natural way out of this limit. 

As in our paper \cite{nS}, we define conformal metrics $\bar g$ (belonging to the conformal class denoted $[g]$) as metrics of the form $\bar g=\mu g$ where $\mu: M \to \mathbb R^+$  is an $L^1$ function on $M$ with mass $1$, i.e. a probability density.

 We then define  
$$\tilde \Lambda_k(M, [g])=\sup_{\tilde g \in [g]} \lambda_k(\tilde g). $$

%
%
%

We prove the following result. 

\begin{theorem}\label{main1}
Let $(M,g)$ be a smooth connected compact boundaryless Riemannian surface.  For any $k \geq 2$, there exists a sequence of metrics $(g_n)_{n \geq 1}\in [g]$ of the form $ g_n=\mu_n g$ such that
$$
\lim_{n\to \infty} \lambda_k(g_n)=\tilde \Lambda_k(M, [g])
$$
 and  a probability measure $\mu$ such that
$$
\mu_{n}\rightharpoonup^*  \mu \,\,\,\,\text{weakly in measure as }  n\to +\infty. 
$$
 Moreover the following decomposition holds
\begin{equation}\label{decomp}
\mu=\mu_r+\mu_s
\end{equation}
where  $\mu_r $ is a $C^\infty$ nonnegative function and $\mu_s$ is the singular part given by 
$$
\mu_s=\sum_{i=1}^K c_i \delta_{x_i}
$$
for some $K \geq 1$, $c_i >0$ and some points $x_i \in M$. Furthermore, the number $K$ satisfies the bound 
$$
K \leq k-1
$$
Moreover, the weights $c_i >0$ belong to the discrete set 
\begin{equation}\label{weightSing}
c_i \in \bigcup_{j=1}^{k} \Big  \{ \frac{\tilde \Lambda_j(\mathbb S^2,[g_{round}])}{\tilde \Lambda_k(M, [g])} \Big \}. 
\end{equation}
The regular part of the limit density $\mu$, i.e. $\mu_r$ is either identically  zero or $\mu_r$ is absolutely continuous with respect to the riemannian measure with a  smooth positive density vanishing at most at a finite number of points on $M$. 

Furthermore, if we denote $A_r$ the volume of the regular part $\mu_r$, i.e. $A_r=A_{\mu_rg}(M)$, then $A_r$ belongs to the discrete set
\begin{equation}\label{weightReg}
A_r \in \bigcup_{j=0}^{k} \Big \{ \frac{\tilde \Lambda_j(M, [g])}{\tilde \Lambda_k(M, [g])} \Big \}.
\end{equation}

Finally, if we denote $\mathcal U$ the eigenspace of the Laplacien on $(M, \mu_rg)$ associated to the eigenvalue $\tilde \Lambda_k(M, [g])$, then there exists a family of eigenvectors $\left \{ u_1,\cdot \cdot \cdot,u_\ell \right \} \subset \mathcal U$ such that the map  

\begin{equation}
\left \{
\begin{array}{c}
\phi: M\to \mathbb R^\ell\\
x \to (u_1,\cdot \cdot \cdot,u_\ell)
\end{array} \right. 
\end{equation}  
is a minimizing harmonic map into the sphere $\mathbb S^{\ell-1}$.

\end{theorem}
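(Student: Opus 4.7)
The overall plan is a concentration--compactness and blow-up analysis in the spirit of \cite{nS}, adapted to the genuinely new difficulty that for $k\geq 2$ bubbling cannot be excluded. I begin by picking a maximizing sequence of probability densities $\mu_n$ with $\lambda_k(\mu_ng)\to \tilde\Lambda_k(M,[g])$; by weak-* compactness of probability measures on $M$ a subsequence converges to a probability measure $\mu$. Using the conformal expression $R_{M,\mu g}(u)=\int_M |\nabla u|_g^2\,dA_g \big/ \int_M u^2\,d\mu$, which in dimension two makes sense for any probability measure, I apply Lebesgue's decomposition theorem and show the singular part is purely atomic: each concentration point contributes a definite positive share to the eigenvalue after blow-up, which both forbids a continuous singular component and bounds the number $K$ of atoms. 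This yields \eqref{decomp}.

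Next, at each atom $x_i$ I perform a blow-up: in a normal chart around $x_i$, rescale by the concentration scale of $\mu_n$ and pass to the limit. Because the surface is flat at infinitesimal scales and conformal changes of the plane correspond via stereographic projection to conformal metrics on $(\mathbb S^2, g_{round})$, the rescaled eigenvalue problem converges to an extremal eigenvalue problem on the sphere, with index at most $k$. The mass $c_i$ stored in the $i$-th bubble therefore equals the ratio of some $\tilde\Lambda_j(\mathbb S^2,[g_{round}])$ to $\tilde\Lambda_k(M,[g])$, proving \eqref{weightSing}. Each bubble ``steals'' at least one eigenvalue index from the prelimit spectrum, while the regular limit problem on $(M,\mu_r g)$ must retain at least one index to realize $\tilde\Lambda_k(M,[g])$, which gives $K\leq k-1$. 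The same sum-over-bubbles bookkeeping, now applied to the regular problem, shows that $A_r$ is the ratio $\tilde\Lambda_j(M,[g])/\tilde\Lambda_k(M,[g])$ for some $j\in\{0,\ldots,k\}$; this is \eqref{weightReg}, with $j=0$ corresponding to the degenerate case $\mu_r\equiv 0$.

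To produce the regular part and the harmonic map I argue as in \cite{nS}: on the support of $\mu_r$ the limit measure solves an extremal problem of the same type but for a possibly lower eigenvalue index, so the standard regularity theory for extremal densities (smoothness away from a finite zero set) applies. The harmonic-map property comes from the first variation of $\mu\mapsto\lambda_k(\mu g)A_{\mu g}(M)$ along admissible perturbations of the regular part, combined with Hadamard's formula for multiple eigenvalues over the eigenspace $\mathcal U$. The resulting Euler--Lagrange condition forces a family $u_1,\ldots,u_\ell\in\mathcal U$ to satisfy $\sum_i u_i^2\equiv\mathrm{const}$ on the support of $\mu_r$, and together with the eigenvalue equation this translates into the harmonic-map equation $-\Delta_{\mu_rg}\phi=|\nabla\phi|_{\mu_rg}^2\phi$ for $\phi=(u_1,\ldots,u_\ell):M\to\mathbb S^{\ell-1}$. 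Minimality of $\phi$ follows by comparing its Dirichlet energy with that of any competitor, using that $\phi$ is built out of genuine eigenfunctions of the extremal metric.

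The main obstacle is the bubble bookkeeping in the blow-up step. One must control the eigenfunctions in the necks joining each bubble to the regular part, prevent eigenvalues from being lost into collapsing regions via a Brezis--Coron type energy gap, and show that each rescaled limit is \emph{exactly} an extremal metric on the sphere for one of the $\tilde\Lambda_j(\mathbb S^2,[g_{round}])$ with $j\leq k$, rather than a degenerate intermediate object. This accounting is absent from \cite{nS}, where the case $k=1$ leaves no room for multiple bubbles and hence no such multi-scale analysis.
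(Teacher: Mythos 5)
Your high-level picture — concentration–compactness, blow-up at atoms producing spherical extremal problems, bookkeeping of eigenvalue indices — is consistent with what the paper does for the weight formula \eqref{weightSing} (the paper also transplants to $\mathbb S^2$ via a conformal chart). But two of your steps have concrete gaps when compared with the actual argument.

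First, your bound $K\leq k-1$ is obtained by asserting that ``each bubble steals at least one index'' while ``the regular limit problem must retain at least one index to realize $\tilde\Lambda_k$.'' The second half of that sentence is exactly what can fail: the theorem allows $\mu_r\equiv 0$, so the regular part need not carry any index at all. The paper handles this by contradiction: if $K=k$, then all $c_i=1/k$ and $\mu_r\equiv 0$, which forces $\tilde\Lambda_k(M,[g])=8\pi/k$ (the Hersch value from the spherical bubbles); one then compares with a redistributed measure having $K-1$ bubbles of mass $1/k$ plus a scaled copy of the maximizer $\bar\mu$ of $\lambda_1$ on $M$, using that for $M$ not a sphere $\tilde\Lambda_1(M,[g])>8\pi$, and for $M=\mathbb S^2$ one first composes with a M\"obius map to force $\mu_r\not\equiv 0$. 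Without this case analysis your index count is circular.

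Second, your starting point ``pick a maximizing sequence $\mu_n$ of probability densities'' skips the regularization scheme the paper relies on: the paper maximizes first over the truncated classes $S_n=\{-\tfrac12\leq\mu\leq n,\ \int_M\mu\,dA_g=1\}$, proves $A_g(\{\mu_n=n\})\leq C/n$ and $A_g(\{\mu_n<0\})=0$, and only then passes $n\to\infty$. This two-step limit is what yields $\mu>0$ a.e.\ and makes the Lebesgue decomposition usable, and it is also where the cited regularity theory (from \cite{nS} and \cite{petrides}) enters to kill the continuous singular part. Your alternative — ``each concentration point contributes a definite positive share, which forbids a continuous singular component'' — is plausible in spirit but is not a proof; the mechanism that actually excludes a continuous singular measure is the regularity theory for the extremal density on the regular region, not a quantization of mass at atoms. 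You should either reproduce that machinery or explain how a direct maximizing sequence can be controlled without it.
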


Theorem \ref{main1} is a generalization of the result of our paper \cite{nS}. In \cite{nS} we proved  Theorem \ref{main1}
for $k=1$ under the assumption that $\tilde \Lambda_1(M, [g])>8\pi$. This last assumption was removed by Petrides in \cite{petrides}. He also suggested some modifications of our proof but basically following the same strategy.  Note that in case $k=1$ no bubbling phenomenon occurs, i.e. $\mu_s$ is identically zero. For
$k=2$ the bubbling phenomenon was observed on the sphere in \cite{nadirSphere}.

The proof of the previous theorem relies once again on a careful analysis of a Schr\"odinger type operator. Indeed consider $g' \in [g]$, by conformal invariance, the equation $-\Delta_{g'} u= \lambda_k (g')u$ reduces to the following problem
\begin{equation}\label{problem}
\left \{ 
\begin{array}{c}
-\Delta_g u =\lambda_k(g')\, \mu \,u ,\,\,\,\mbox{on}\,\,M\\
\int_M \mu \, dA_g=1. 
\end{array} \right . 
\end{equation}

\section{Proof of Theorem \ref{main1}: regular part of the extremal metric }

The proof follows our proof in \cite{nS}. We briefly sketch the main arguments. 

\subsection{Step 1: Regularization}
We perform a regularization by considering $S_n$ the class of densities $\mu $ such that $-\frac12\leq \mu \leq n$, $\int_M \mu dA_g=1$ for $n>0$. Denoting by $\lambda_k(\mu)$ the eigenvalue problem in \eqref{problem} with a density $\mu \in S_n$, we write 
$$\tilde \Lambda_n =\sup_{\mu \in S_n} \lambda_k(\mu). $$ 

We have by a direct application of well-known bounds for Schr\"odinger operators (see \cite{lieb}) that
\begin{pro}
For any given $n>0$, there exists a sequence $\left \{ \mu_{k,n} \right \}_{k \geq 0} \subset S_n$ such that as $k\rightarrow +\infty$

$$\mu_{k,n} \rightharpoonup^*  \mu_n \,\,\,\,\text{weakly in measure} $$

and 

$$\lambda_k(\mu_{k,n}) \rightarrow \tilde \Lambda_n .$$

Furthermore, we have  
 $$\int_M \mu_n\,dA_g=1$$
and 
$$ -\frac12 \leq \mu_n  \leq n.$$
\end{pro}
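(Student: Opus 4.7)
The plan is as follows. The proposition splits into two issues: a priori finiteness of $\tilde \Lambda_n$, and the extraction of a weak-$*$ convergent maximizing sequence whose limit respects the two constraints defining $S_n$.

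First, one establishes $\tilde \Lambda_n<\infty$. This is precisely where the Schr\"odinger-operator bounds of Lieb enter. Every $\mu \in S_n$ is uniformly bounded in $L^\infty(M,dA_g)$ by $\max(n,\tfrac12)$, so a Cwikel--Lieb--Rozenblum type counting estimate (or its two-dimensional logarithmic analogue) applied to the operator $-\Delta_g - \lambda\mu$ bounds the $k$-th eigenvalue $\lambda_k(\mu)$ in terms of $k$, $n$ and the background geometry only. Taking the supremum over $\mu \in S_n$ gives $\tilde \Lambda_n<\infty$.

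Granting this, a maximizing sequence $\mu_j \in S_n$ with $\lambda_k(\mu_j)\to\tilde \Lambda_n$ is chosen directly from the definition of the supremum; relabel it as $\mu_{k,n}$ in the notation of the statement, reading the first subscript as the sequence parameter. Because $|\mu_{k,n}| \le \max(n,\tfrac12)$ uniformly, the sequence lies in a norm-bounded subset of $L^\infty(M,dA_g)$. Since $L^\infty(M,dA_g)$ is the dual of the separable space $L^1(M,dA_g)$, Banach--Alaoglu furnishes a subsequence (not relabeled) converging in the weak-$*$ topology of $L^\infty$ to some $\mu_n \in L^\infty(M,dA_g)$. This is in particular weak convergence as measures, so $\mu_{k,n}\rightharpoonup^* \mu_n$.

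It remains to verify that the limit inherits the defining constraints of $S_n$. The pointwise two-sided bound $-\tfrac12 \le \mu_n \le n$ almost everywhere is preserved, because the set $\{f \in L^\infty : -\tfrac12 \le f \le n \text{ a.e.}\}$ is convex and weak-$*$ closed, being the intersection of the half-spaces $\{f: \int_M f\varphi\, dA_g \ge -\tfrac12\int_M \varphi\, dA_g\}$ and $\{f:\int_M f\varphi\, dA_g \le n \int_M \varphi\, dA_g\}$ indexed over non-negative $\varphi \in L^1(M,dA_g)$. The mass constraint $\int_M \mu_n\, dA_g=1$ follows by testing the weak-$*$ convergence against the constant function $1\in L^1(M,dA_g)$.

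The main (in fact only) obstacle is the first step, namely the Schr\"odinger-type a priori bound giving $\tilde \Lambda_n<\infty$; once this is in place the rest is a routine Banach--Alaoglu plus closedness-of-constraints argument. It is worth stressing that the proposition asserts \emph{nothing} about whether $\lambda_k(\mu_n)$ equals $\tilde\Lambda_n$: upper semicontinuity of $\mu\mapsto \lambda_k(\mu)$ along weak-$*$ convergent sequences is precisely what may fail in the subsequent passage $n\to\infty$, and this failure is the source of the bubbling phenomena captured in Theorem \ref{main1}.
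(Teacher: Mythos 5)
Your proposal is essentially the argument the paper is gesturing at: the authors give no written proof, saying only that the proposition follows ``by a direct application of well-known bounds for Schr\"odinger operators (see [Lieb])'', and your proof correctly identifies that the only substantive input is an a priori bound $\tilde\Lambda_n<\infty$ of CLR/Lieb type (handling the sign-changing weight, which is why ordinary Korevaar bounds do not suffice here and why the remark after Lemma \ref{negSet} points to the work on sign-changing potentials), after which the extraction of a weak-$*$ convergent maximizing sequence and the verification that the limit stays in the box $-\tfrac12\le\mu_n\le n$ with unit mass is routine Banach--Alaoglu plus closedness of convex constraints. Two things you do particularly well that the paper leaves implicit: you note that taking the limit in $L^\infty$ weak-$*$ (rather than merely as measures) is what makes the pointwise inequalities on $\mu_n$ meaningful, and you observe that the proposition makes no claim about $\lambda_k(\mu_n)=\tilde\Lambda_n$, correctly locating the failure of upper semicontinuity as the source of the later bubbling. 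The one presentational caveat is that the index $k$ in $\mu_{k,n}$ is, as you say, an overloaded sequence parameter distinct from the fixed spectral index $k$ in $\lambda_k$; your reading of it is the intended one.
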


\subsection{Step 2: Passing to the limit in $n$ }

We need to control the two sets

$$E^n_{-}= \left \{ x \in M,\,\,\,-\frac12 \leq \mu_n(x) \leq 0 \right \}$$
and
$$E_n= \left \{ x \in M,\,\,\,\mu_n(x)=n \right \}.$$
 
 We have (see \cite{nS})
\begin{lemma}\label{measEn}
Let $n>0$. Then there exists $C>0$ such that  
$$A_g(E_n) \leq C/n. $$
\end{lemma}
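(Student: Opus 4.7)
The plan is to read the bound straight off the normalization $\int_M \mu_n \, dA_g = 1$ combined with the two-sided pointwise constraint $-\tfrac12 \leq \mu_n \leq n$ that defines the admissible class $S_n$. No variational argument, no eigenfunction analysis, and no appeal to the optimality of $\mu_n$ as a maximizer are needed for this particular estimate; it is purely a consequence of elementary measure-theoretic bookkeeping.

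Concretely, I would split the integral of $\mu_n$ over $M$ into the contribution from $E_n = \{\mu_n = n\}$ and that from its complement. Since $\mu_n \equiv n$ on $E_n$ and $\mu_n \geq -\tfrac12$ elsewhere, one obtains
\begin{equation*}
1 \;=\; \int_M \mu_n \, dA_g \;=\; n\, A_g(E_n) \;+\; \int_{M \setminus E_n} \mu_n \, dA_g \;\geq\; n\, A_g(E_n) \;-\; \tfrac12 A_g(M \setminus E_n) \;\geq\; n\, A_g(E_n) \;-\; \tfrac12 A_g(M).
\end{equation*}
Rearranging yields $A_g(E_n) \leq C/n$ with $C := 1 + \tfrac12 A_g(M)$, a constant depending only on the fixed background metric $g$.

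There is no genuine obstacle here. The only point worth emphasizing is that the slightly unusual lower bound $-\tfrac12$ built into the definition of $S_n$ (rather than the naive $\mu \geq 0$) is doing real work: it is precisely what keeps the complementary integral from being unboundedly negative and so keeps the accounting tight. The $1/n$ scaling obtained is moreover sharp in its $n$-dependence, and this quantitative feature is exactly what will, upon passing $n \to \infty$ in the subsequent step, force the mass concentrating on $E_n$ to localize to a finite set of points and thereby generate the singular part $\mu_s = \sum c_i \delta_{x_i}$ appearing in the decomposition \eqref{decomp} of the main theorem.
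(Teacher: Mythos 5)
Your proof is correct, complete, and is the natural argument: the bound follows purely from the normalization $\int_M \mu_n\,dA_g=1$ and the two-sided pointwise constraint $-\tfrac12\le\mu_n\le n$ defining the class $S_n$, with no need for any optimality property of $\mu_n$. Splitting the integral over $E_n$ and its complement and using the lower bound $\mu_n\ge -\tfrac12$ off $E_n$ gives $1\ge n\,A_g(E_n)-\tfrac12 A_g(M)$, hence $A_g(E_n)\le \bigl(1+\tfrac12 A_g(M)\bigr)/n$; with the paper's normalization $A_g(M)=1$ one can even take $C=3/2$. The paper itself gives no proof here and simply refers to \cite{nS}, where the corresponding estimate is established by the same elementary bookkeeping, so your route is essentially the one intended.

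One small caution about the closing remark: the $1/n$ decay of $A_g(E_n)$ is the right quantitative input for mass to escape to the singular part as $n\to\infty$, but by itself it does not \emph{force} the singular part to be a finite atomic sum. Ruling out a diffuse singular (Cantor-type) component and bounding the number of atoms by $k-1$ are separate matters, relying on the regularity theory of \cite{nS}, \cite{petrides} and on the spectral counting arguments in the singular-part section. So the lemma is a necessary ingredient, not the mechanism that produces the structure $\mu_s=\sum c_i\delta_{x_i}$.
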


We now come to the measure estimate of the set $E^n_{-}$.  
\begin{lemma} \label{negSet}
For any $n>0$, we have 
 $$A_g(E^n_-)=0.$$
\end{lemma}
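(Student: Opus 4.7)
The plan is to argue by contradiction: assume $A_g(E_-^n) > 0$, and use the maximality of $\mu_n$ in $S_n$ (subject to $\int_M \mu \, dA_g = 1$) to derive a first-order optimality condition that forces a contradiction via strong unique continuation.

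First, I would compute the Hadamard derivative of $\lambda_k$ at $\mu_n$. For a simple eigenvalue this reads $\delta \lambda_k = -\lambda_k(\mu_n) \int_M u_k^2 \, \delta \mu \, dA_g$ with $u_k$ the normalized eigenfunction. Since $\lambda_k(\mu_n)$ is typically degenerate for $k \geq 2$, I would use Clarke's subdifferential calculus to produce a convex combination $\Phi = \sum_j \alpha_j u_j^2$ of squared eigenfunctions $u_j$ in the $\lambda_k(\mu_n)$-eigenspace (with $\alpha_j > 0$, $\sum_j \alpha_j = 1$) such that $\int_M \Phi \, \delta\mu \, dA_g \geq 0$ for every mass-preserving direction $\delta\mu$ keeping $\mu_n + \epsilon \delta\mu$ inside $S_n$. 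Because admissible directions are bi-directional on the interior region $I := \{-1/2 < \mu_n < n\}$, this forces a bang-bang-type condition: $\Phi \equiv \gamma$ a.e.\ on $I$ for some constant $\gamma \geq 0$, $\Phi \geq \gamma$ on $\{\mu_n = -1/2\}$, and $\Phi \leq \gamma$ on $\{\mu_n = n\}$.

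The key observation is that $\{-1/2 < \mu_n \leq 0\} \subset I$, so $\Phi \equiv \gamma$ on this set. Strong unique continuation (Aronszajn) for $-\Delta u_j = \lambda_k(\mu_n)\, \mu_n\, u_j$ with $\mu_n \in L^\infty$ forbids $u_j$ from vanishing on a positive-measure set, so $\gamma > 0$. A Stampacchia-type argument applied to the constant level set $\{\Phi = \gamma\}$ first gives $\nabla u_j = 0$ a.e.\ there, and the $H^2$ regularity of $u_j$ (inherited from $\mu_n \in L^\infty$) then yields $\Delta u_j = 0$ a.e.\ on the same set; substituting into the eigenvalue equation forces $\mu_n = 0$ a.e.\ on this set. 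This rules out $A_g(\{-1/2 < \mu_n < 0\}) > 0$. The remaining boundary cases $A_g(\{\mu_n = -1/2\}) > 0$ and $A_g(\{\mu_n = 0\}) > 0$ are handled by complementary arguments: for the former, a one-sided mass-shifting perturbation exploiting $\Phi \geq \gamma$ on $\{\mu_n = -1/2\}$; for the latter, a second-order calculation comparing the renormalized scalings $(\mu_n \pm \epsilon\, 1_A)/(1 \pm \epsilon A_g(A))$, which forces $\gamma = 1$ together with an additional rigidity incompatible with $A_g(\{\mu_n = 0\}) > 0$ (or, alternatively, the direct rearrangement argument from \cite{nS}).

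The main obstacle I anticipate is the degeneracy of $\lambda_k(\mu_n)$ for $k \geq 2$: instead of a single eigenfunction, one must work with a family selected from Clarke's subdifferential and arrange the unique continuation and Stampacchia steps to apply simultaneously to every $u_j$ appearing in the convex combination $\Phi$. The borderline case $\{\mu_n = 0\}$, where the first-order variation vanishes identically and the equation itself imposes no constraint, is also delicate and requires the second-order or rearrangement argument described above.
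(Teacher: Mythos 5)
Your high-level strategy---argue by contradiction from first-order optimality of $\mu_n$ in $S_n$, using Clarke's subdifferential to handle the degeneracy of $\lambda_k$ and to produce a convex combination $\Phi = \sum_j \alpha_j u_j^2$ of squared eigenfunctions with $\Phi \equiv \gamma$ on $I = \{-1/2 < \mu_n < n\}$, $\Phi \geq \gamma$ on $\{\mu_n = -1/2\}$, $\Phi \leq \gamma$ on $\{\mu_n = n\}$, and $\gamma > 0$ by unique continuation---is the right one; the present paper does not reproduce the proof (it defers to \cite{nS} and \cite{GNS}), but this is the stationarity framework those papers use, so on that level you are on target.

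However, the Stampacchia step as you state it is not correct. From $\Phi \equiv \gamma$ on a positive-measure set $A$, Stampacchia gives $\nabla\Phi = 0$ a.e.\ on $A$, i.e.\ $\sum_j \alpha_j u_j \nabla u_j = 0$ a.e.\ on $A$; it does \emph{not} give $\nabla u_j = 0$ for each $j$. A direct counterexample: on the round $\mathbb S^2$, with $u_i = x_i$ the first eigenfunctions and $\alpha_i = 1/3$, one has $\Phi = \tfrac13(x_1^2+x_2^2+x_3^2) \equiv \tfrac13$, yet $\nabla_{\mathbb S^2} x_i \not\equiv 0$. Consequently the chain ``$\nabla u_j = 0 \Rightarrow \Delta u_j = 0 \Rightarrow \mu_n = 0$ on $\{\Phi=\gamma\}$'' never gets started, and the conclusion $\mu_n = 0$ a.e.\ on $\{\Phi=\gamma\}$ is false in general (on the part of $I$ where $\mu_n>0$ it would contradict positivity). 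The correct and in fact sharper route is to apply Stampacchia twice, using $\Phi \in H^2$ (from $u_j \in H^2 \cap L^\infty$): first $\nabla\Phi = 0$ a.e.\ on $A$, then $D^2\Phi = 0$ and in particular $\Delta\Phi = 0$ a.e.\ on $A$. Combined with the pointwise identity
$$
\Delta\Phi \;=\; 2\sum_j \alpha_j |\nabla u_j|^2 + 2\sum_j \alpha_j u_j\,\Delta u_j \;=\; 2\sum_j \alpha_j |\nabla u_j|^2 \;-\; 2\lambda_k(\mu_n)\,\mu_n\,\Phi,
$$
on $A \cap \{\mu_n < 0\}$ this reads $0 = 2\sum_j\alpha_j|\nabla u_j|^2 - 2\lambda_k\mu_n\gamma$, whose right-hand side is strictly positive because $\mu_n < 0$, $\lambda_k>0$ and $\gamma>0$. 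This contradiction rules out $A_g(\{-1/2 < \mu_n < 0\}) > 0$ directly, with no detour through $\nabla u_j$ or a $\mu_n=0$ conclusion. On $A\cap\{\mu_n=0\}$ the same identity gives only $\sum_j\alpha_j|\nabla u_j|^2 = 0$, which is consistent with the equation, so that case genuinely requires the separate second-order or rearrangement argument you gesture at; your treatment of the constrained boundary $\{\mu_n=-1/2\}$, where you only have the inequality $\Phi\ge\gamma$ and Stampacchia is unavailable, is likewise left at the level of a sketch. In short: right framework, but the central Stampacchia/unique-continuation deduction is incorrect as written and must be replaced by the $\Delta\Phi$-identity argument above, and the two boundary cases remain to be filled in.
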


\begin{remark}
The previous lemma and particularly its proof is instrumental in our paper with Grigor'yan \cite{GNS} on  bounds on the number of negative eigenvalues for Schr\"odinger operators with sign-changing potentials.   
\end{remark}

By the previous Lemmata, one can prove the following convergence result:

$$\mu_n \rightharpoonup^* \mu ,\,\,\,\text{weakly in measure as}\,\, n\to \infty$$

and furthermore

$$\mu >0 \,\,\mbox{a.e. in }M .$$

By Lebesgue decomposition theorem, we have 
\begin{equation}
\mu= \mu_r +\mu_s,
\end{equation}
where $\mu_s$, the singular part of the measure, can be decomposed into an absolutely singular part and a discrete part. 

%
%

The regularity theory developped in \cite{nS} (and invoking the result of Petrides \cite{petrides}) shows that the absolutely singular part of the measure $\mu$ vanishes identically. Hence the previous decomposition only involves a regular part, absolutely continuous with respect to the Riemannian measure, and a purely discrete singular part. 

Finally, by the results in \cite{nS}, the statements of Theorem \ref{main1} on the regular part follow. We refer the reader to our paper (see also \cite{petrides}). We postpone the proof of  \eqref{weightReg} to the next section since the proof is very similar to the one of \eqref{weightSing}.

\section{Proof of Theorem \ref{main1}: the singular part}

We decompose the spectrum of $-\Delta$ on $M$ endowed with the metric $\mu \,g$ into a regular part and a singular part. We give the definitions below. 

\begin{defi}\label{specreg}
We denote $\Lambda_r $ the discrete set of eigenvalues of $-\Delta$ on the manifold $(M,\mu_r \,g)$ where $\mu_r$ is the regular part of the measure $\mu$. 
\end{defi}
\begin{defi}\label{specsing}
We denote $\Lambda_s $ the finite discrete set of eigenvalues defined by: let $i=1,...,K$ where $K$ is the number of bubbles in Theorem \ref{main1}. Fix $k \geq 2$ and denote  
$$
\lambda^{x_i}_k=\lim_{\epsilon \to 0} \lambda^{x_i,\epsilon}_k 
$$ 
where $\lambda^{x_i,\epsilon}_k$ is a (nontrivial) eigenvalue in the flat metric for $\epsilon$ sufficiently small of 
\begin{equation}
\left \{ 
\begin{array}{c}
-\Delta u =\lambda u\,\,\,\,\mbox{on}\,\,B(x_i,\epsilon),\\
u=0\,\,\,\,\mbox{on}\,\,\partial B(x_i,\epsilon),
\end{array} \right . 
\end{equation}
Notice that the previous limit exists by monotonicity of the eigenvalues with respect to the domain. Then we have 
$$
\Lambda_s = \bigcup_{k \geq 0} \bigcup_{i=1}^K \lambda_k^{x_i},  
$$
and  $\lambda^{x_i}_k$ is a "singular" eigenvalue associated to the Dirac mass $\delta_{x_i}$ where $x_i \in M$. 
\end{defi}

We now split the spectrum in the following way
$$
Spec(-\Delta)= \Lambda_r \cup \Lambda_s. 
$$
By definition, if $\mu_r \equiv 0$ then $\Lambda_r$ is empty and the spectrum is purely singular. 

We first prove the following result on the singular spectrum. 

\begin{lemma}
If $\mu_r$ is not identically zero, then for any $i=1,...,K$ 
$$
\lambda^{x_i}_{K}=0,
$$
i.e. 
$$
K\leq k. 
$$
\end{lemma}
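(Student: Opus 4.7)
The plan is a dimension-counting argument via the min--max characterisation of eigenvalues applied to the extremal metric $\mu g$. The two sources of test functions that one must combine are a nontrivial contribution from the regular piece $\mu_r g$ (which exists precisely because $\mu_r \not\equiv 0$) together with one bubble test function localised at each concentration point $x_i$.

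First I would construct a test function at each bubble. A standard concentration analysis (following the arguments of \cite{nS,petrides,nadirSphere}) shows that after rescaling by the appropriate conformal factor, the sequence $\mu_n$ near $x_i$ converges, in normal coordinates and via stereographic projection, to an extremal density on $\mathbb S^2$ realising $\tilde \Lambda_{j_i}(\mathbb S^2,[g_{round}])$ for some $j_i \geq 1$. Pulling back the first eigenfunction of this spherical bubble, multiplying by a cut-off and extending by zero, produces a test function $\psi_i$ supported in a shrinking disk $B(x_i,\epsilon_n)$ whose Rayleigh quotient against $\mu g$ equals $\tilde \Lambda_k(M,[g])+o(1)$. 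This is exactly where the identity $c_i\,\tilde \Lambda_k(M,[g])=\tilde \Lambda_{j_i}(\mathbb S^2,[g_{round}])$ from \eqref{weightSing} is used: it records the balance between the rescaled Dirichlet spectrum on $B(x_i,\epsilon_n)$ and the weight $c_i$ of the concentrated Dirac mass.

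Second I would exploit the regular part. Since $\mu_r \not\equiv 0$ has a smooth positive density on an open set, the weighted Laplacian on $(M,\mu_r g)$ admits a nontrivial first eigenfunction $\varphi_0$ with zero mean; by the extremality of $\tilde \Lambda_k(M,[g])$, its associated eigenvalue is bounded by $\tilde \Lambda_k(M,[g])$. The crucial feature is that $\varphi_0$ is effectively supported away from the bubble points, so it is asymptotically $L^2(\mu g)$-orthogonal to each $\psi_i$.

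Third I would assemble the variational argument. The supports of $\psi_1,\dots,\psi_K$ are mutually disjoint and disjoint from the effective support of $\varphi_0$, so $\{\varphi_0,\psi_1,\dots,\psi_K\}$ is linearly independent and asymptotically $L^2(\mu g)$-orthogonal. It spans a subspace of dimension $K+1$ on which the Rayleigh quotient is bounded by $\tilde \Lambda_k(M,[g])+o(1)$. By the min--max characterisation of $\lambda_{K+1}(\mu g)$, this forces $\lambda_{K+1}(\mu g)\leq \tilde \Lambda_k(M,[g])=\lambda_k(\mu g)$, and monotonicity of the eigenvalue labelling yields $K+1 \leq k+1$, i.e.\ $K\leq k$. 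The equality $\lambda^{x_i}_K = 0$ in the statement is just the assertion that there is no room for a further singular eigenvalue beyond this threshold.

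The main obstacle is the bubble step: making precise the concentration profile of $\mu_n$ at each $x_i$ and identifying the rescaling so that the bubble test functions really do have Rayleigh quotient converging to $\tilde \Lambda_k(M,[g])$. Once this bubble analysis (carried out in detail in \cite{nS,nadirSphere}) is available, the counting argument above reduces to elementary linear algebra.
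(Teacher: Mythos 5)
Your approach is genuinely different from the paper's and, as written, has two significant gaps.

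The paper's proof does not construct bubble profiles at all. It builds for each $x_i$ a cheap log-type cutoff $\psi_{i,\epsilon}$, equal to $1$ near $x_i$ and supported in a $\delta$-ball, with Dirichlet energy $<\epsilon$; since the rescaled metric $\psi_{i,\epsilon} g_n$ concentrates mass $c_i$ near $x_i$ and the Dirichlet energy is conformally invariant, the Rayleigh quotient of $\psi_{i,\epsilon}$ is $<2\epsilon/c_i$. With $K$ such functions of mutually disjoint supports one directly concludes $\lambda^{x_i}_K < 2\epsilon/\inf_i c_i$, hence $\lambda^{x_i}_K=0$ on letting $\epsilon\to 0$. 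The point is that the test functions have \emph{arbitrarily small} Rayleigh quotient; no information about the limiting bubble profile, and in particular no form of \eqref{weightSing}, is needed. Your construction, by contrast, aims for test functions with Rayleigh quotient $\approx \tilde\Lambda_k(M,[g])$ and invokes the identity $c_i\,\tilde\Lambda_k(M,[g])=\tilde\Lambda_{j_i}(\mathbb S^2,[g_{round}])$. That identity is precisely \eqref{weightSing}, which the paper proves only \emph{later} (Lemma \ref{weight}) and whose proof uses the bubble-count estimate $K\leq k-1$ obtained from the present lemma. Using \eqref{weightSing} here is therefore circular as the paper is organized.

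The second gap is the concluding dimension count. From a $(K+1)$-dimensional subspace with Rayleigh quotient $\leq \tilde\Lambda_k(M,[g])+o(1)$ the min--max principle gives $\lambda_{K+1}(\mu g)\leq \lambda_k(\mu g)$, but this alone does not force $K+1\leq k+1$: the sequence of eigenvalues is only non-decreasing, and $\lambda_{K+1}=\lambda_k$ with $K+1>k$ is perfectly consistent if $\tilde\Lambda_k(M,[g])$ has high multiplicity (which it often does in extremal problems). To make a counting argument of this type work you would need either strict separation $\lambda_{K+1}<\lambda_k$ or an additional argument ruling out the equality case, neither of which is supplied. The paper sidesteps this entirely by producing Rayleigh quotients that vanish in the limit, directly showing $\lambda^{x_i}_K=0$ rather than merely $\leq\lambda_k$. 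Finally, your claim that the first eigenvalue of $(M,\mu_r g)$ is bounded by $\tilde\Lambda_k(M,[g])$ by extremality also deserves scrutiny, since $\mu_r$ has total mass $A_r<1$ and the conformal bound $\tilde\Lambda_1(M,[g])$ applies to the normalized density $\mu_r/A_r$, not to $\mu_r$ itself.
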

\begin{proof}
There exists $\delta >0$ such that one can find $K$ $\delta-$neighborhoods of each $x_i \in M$ such that they do not intersect. For any $\epsilon >0$, one constructs functions $\psi_{i,\epsilon}$ for any $i=1,...,K$ supported in a ball of center $x_i$ and radius $\delta$ with value $1$ in this ball and satisfying 
$$
\int_{B(x_i,\delta)} |\nabla \psi_{i,\epsilon}|^2 <\epsilon. 
$$
Then modifying the metric $g_n$ into $g_n'=\psi_{i,\epsilon} g_n$ such that 
$$
\lim_{n \to \infty} A_{g_n'}=c_i,
$$
one has for sufficiently large $n$ that 
$$
\frac{\int_{B(x_i,\delta)} |\nabla \psi_{i,\epsilon}|^2}{\int_{B(x_i,\delta)} |\psi_{i,\epsilon}|^2} <\frac{2\epsilon}{c_i}. 
$$
Since the functions $\psi_{i,\epsilon}$ have mutually disjoint supports, it follows that 
$$
\lambda^{x_i}_{K} < 2\frac{\epsilon}{\inf_i c_i},
$$
hence the result since $\epsilon$ can be taken arbitrary small. 
\end{proof}

As an improvement one has
\begin{lemma}
We have actually
$$
K \leq k-1. 
$$
\end{lemma}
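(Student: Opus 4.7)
The plan is to refine the previous lemma by adjoining the constant function $1$ to the test space, thereby gaining one extra dimension. Proceed by contradiction: assume $K=k$, and take the $k$ cutoffs $\psi_{1,\epsilon},\ldots,\psi_{k,\epsilon}$ produced in the previous lemma, which have pairwise disjoint supports $B(x_i,\delta)$ and satisfy $\int_M |\nabla \psi_{i,\epsilon}|^2\,dA_g<\epsilon$ with $\psi_{i,\epsilon}\equiv 1$ near $x_i$. Form the subspace
$$V_\epsilon \;=\; \mathrm{span}\bigl\{1,\psi_{1,\epsilon},\ldots,\psi_{k,\epsilon}\bigr\}\subset H^1(M),$$
which has dimension $k+1$ because each $\psi_{i,\epsilon}$ vanishes on the non-empty open set $M\setminus\bigcup_i B(x_i,\delta)$ while $1$ does not. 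Applying the Courant--Hilbert min--max formulation of $\lambda_k$ to $(k+1)$-dimensional subspaces of $H^1(M)$ yields
$$\tilde\Lambda_k(M,[g]) \;=\; \lambda_k(\mu g)\;\leq\; \max_{u\in V_\epsilon\setminus\{0\}} R_{M,\mu g}(u),$$
and the aim is to show this maximum is $O(\epsilon)$, which contradicts $\tilde\Lambda_k(M,[g])>0$ once $\epsilon$ is small enough.

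For $u=c_0+\sum_i c_i\psi_{i,\epsilon}\in V_\epsilon$, the disjoint-support property yields
$$\int_M |\nabla u|^2\,dA_g \;=\; \sum_{i=1}^k c_i^2\int_M |\nabla\psi_{i,\epsilon}|^2\,dA_g\;<\;\epsilon \sum_i c_i^2,$$
and, using the singular part of $\mu$ together with the regular part outside the balls,
$$\int_M u^2\,\mu\,dA_g \;\geq\; \sum_{i=1}^k c_i\bigl(c_0+c_i\bigr)^2\;+\;c_0^2\int_{M\setminus\bigcup_i B(x_i,\delta)} \mu_r\,dA_g.$$
When $\mu_r\not\equiv 0$, choosing $\delta$ small enough keeps the $\mu_r$-term bounded below by a positive constant, so that the right-hand side is a positive-definite quadratic form on the unit sphere of $\mathbb{R}^{k+1}$; the maximal Rayleigh quotient over $V_\epsilon$ is then $O(\epsilon)$, giving the contradiction.

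I expect the main obstacle to be the case $\mu_r\equiv 0$: there the lower bound above degenerates along the direction $c_i=-c_0$ (along which $u$ vanishes on $\mathrm{supp}\,\mu_s$), so the constant function fails to add an effective dimension after quotienting by the $\mu$-null. To treat this regime one replaces the role of $1$ by a cutoff sphere-harmonic supported in a single bubble neighborhood, exploiting the multiplicity of $\tilde\Lambda_1(\mathbb{S}^2)$ and the constraint \eqref{weightSing}, which forces at least one bubble $x_i$ to carry an additional admissible eigenfunction with rescaled Rayleigh quotient strictly below $\tilde\Lambda_k(M,[g])$. A secondary technical point is the pointwise meaning of $u(x_i)$ for $u\in H^1(M)$ on a surface; this is sidestepped by carrying out the entire construction at the level of the smooth regularized metrics $\mu_n g$ from Step 2 and passing to the limit via $\mu_n\rightharpoonup^* \mu$ together with $\lambda_k(\mu_n g)\to \tilde\Lambda_k(M,[g])$.
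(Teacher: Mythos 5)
There is a genuine gap, and it is precisely where your own final paragraph flags the ``main obstacle.'' In the paper's proof, the very first step under the contradiction hypothesis $K=k$ is to show that this forces $\mu_r\equiv 0$ (together with $c_i=1/K$ for all $i$): if $\mu_r\not\equiv 0$ then the previous lemma already gives $\lambda^{x_i}_K=0$, which is impossible since the singular eigenvalues are defined as limits of strictly positive Dirichlet eigenvalues on shrinking balls. So the only case your argument must actually address is exactly the degenerate one, $\mu_r\equiv 0$, where you observe correctly that along $c_i=-c_0$ the function $u=c_0\bigl(1-\sum_i\psi_{i,\epsilon}\bigr)$ annihilates $\int_M u^2\,d\mu$ and the constant function contributes no effective extra dimension. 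Your proposed repair --- swap $1$ for a cutoff sphere-harmonic in one bubble --- does not close the gap: such a test function has normalized Rayleigh quotient roughly $\tilde\Lambda_1(\mathbb S^2)/c_i\approx\tilde\Lambda_k(M,[g])$, not $O(\epsilon)$, so adjoining it to $\mathrm{span}\{\psi_{1,\epsilon},\dots,\psi_{k,\epsilon}\}$ only reproduces the value $\tilde\Lambda_k$ as an upper bound and yields no contradiction. Moreover, the constraint \eqref{weightSing} you invoke to locate such an extra eigenfunction is established later in the paper, after this lemma, so leaning on it here risks circularity.

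The paper's route through the critical case is entirely different and uses more global information: having reduced to $\mu_r\equiv 0$ with $k$ equal masses, it compares the extremal configuration to a modified one in which a single Dirac mass is traded for a scaled copy $\bar\mu/K$ of the maximizer $\bar\mu$ of the \emph{first} conformal eigenvalue. When $M$ is not a sphere one has $\lambda_1(M,\bar\mu g)>8\pi$ (this is the extra input you do not use), which raises the $k$-th eigenvalue of the modified sequence strictly above the all-bubble value and contradicts maximality of $\tilde\Lambda_k(M,[g])$; when $M=\mathbb S^2$ one conjugates by a M\"obius transformation to enforce $\mu_r\not\equiv 0$ and falls back on the easy case. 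Your test-function computation is a clean and correct treatment of the case $\mu_r\not\equiv 0$ (after regularizing via the smooth approximants $\mu_n g$ to make sense of $u(x_i)$, as you note), and the positive-definiteness of the quadratic form $\sum_i m_i(c_0+c_i)^2 + c_0^2\int\mu_r$ is verified correctly; but since that case is already disposed of by the previous lemma, the proposal as written does not prove $K\le k-1$.
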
 
\begin{proof}
Assume by contradiction $K=k$. This implies that all the masses satisfy for $i=1,...,K$
$$
c_i=\frac{1}{K}
$$
and furthermore, 
$$
\mu_r \equiv  0. 
$$
Indeed if $\mu_r$ is not identically zero, then it implies by the previous lemma that $\lambda^{x_i}_K=0$. 
Denote $\bar \mu$ the measure on $M$ maximising the first non trivial eigenvalue in the conformal class $[g]$. By \cite{nS}, $\bar \mu$ has no singular part. As a consequence one has 
$$
\lambda_k =\frac{8\pi}{k}. 
$$
Assume that $M$ is not topologically a sphere. Then we have 
$$
\lambda_1(M,\bar \mu g) >8\pi. 
$$
We then modify the approximating sequence $g_n= \mu_n g$ into $\tilde g_n= \tilde \mu_n \,g$ sutch that 
$$
\tilde \mu_n \rightharpoonup^* \tilde \mu
$$ 
and 
$$
\tilde \mu= \tilde \mu_s +\tilde \mu_r
$$
where 
$$
\tilde \mu_r = \bar \mu /K
$$
and $\tilde \mu_s$ has point singularities at $x_1,...,x_{K-1}$ with weights $1/K$. Moreover one can choose the sequence $\tilde \mu_n$ such that the first eigenvalue at singular points will be  equal to $\frac{8\pi}{K}$. 

Since the first eigenvalue of the regular part $\tilde \mu_r$ is strictly larger than $8\pi/K$, it follows that 
$$
\lim_{n \to \infty} \lambda_K(M,\tilde \mu_n \,g) >\frac{8\pi}{K},
$$
hence a contradiction. 

Assume now that $M$ is a sphere. In that case, one can always assume without loss of generality, that the regular part $\mu_r$ is not identically zero. This can be done by composing the approximating measures $\mu_n$ with a suitable M\"obius transformation. Hence we are done.  
\end{proof}

We now prove the relations \eqref{weightSing} and \eqref{weightReg}. Actually the proofs are completely parallel and we just prove \eqref{weightSing}. This completes the proof of Theorem \ref{main1}. We then prove 

\begin{lemma}\label{weight}
Using the notations of Theorem \ref{main1}, one has 
\begin{equation}
c_i \in \bigcup_{j=1}^{k} \Big \{\frac{\tilde \Lambda_j(\mathbb S^2,[g_{round}])}{\tilde \Lambda_k(M, [g])} \Big \}. 
\end{equation}
\end{lemma}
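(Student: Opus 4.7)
The plan is to perform a blow-up analysis around each bubble point $x_i$ and transport the resulting limit problem to $\mathbb{S}^2$ via stereographic projection, exploiting the conformal invariance of the Dirichlet energy in dimension two.

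First, fix $i \in \{1,\ldots,K\}$ and work in conformal normal coordinates near $x_i$, so that $g = e^{2\phi}|dx|^2$ with $\phi(0) = 0$. Since $\mu_n \rightharpoonup^* \mu$ with $\mu(\{x_i\}) = c_i$, one can select scales $r_n \to 0$ and a radius $R>0$, chosen so that $B(x_i, Rr_n)$ isolates the $i$-th bubble from the other bubbles and from the support of $\mu_r$, such that $\int_{B(x_i, Rr_n)} \mu_n \, dA_g \to c_i$. Define $\nu_n(y) := r_n^2 \mu_n(x_i + r_n y)$ on the expanding domain $B_R \subset \mathbb{R}^2$, so that $\int \nu_n \, dy \to c_i$. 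The rescaled eigenfunctions $\tilde u_n(y) := u_{k,n}(x_i + r_n y)$ then satisfy
$$
-\Delta_{\mathbb{R}^2} \tilde u_n = \lambda_k(g_n)\, \nu_n \, \tilde u_n
$$
by the conformal invariance of \eqref{problem}.

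Second, transport the problem to $\mathbb{S}^2$ via inverse stereographic projection $\sigma^{-1}: \mathbb{R}^2 \to \mathbb{S}^2 \setminus \{N\}$, which realizes $\mathbb{R}^2$ as a conformal chart of the round sphere. The rescaled densities $\nu_n$ thus correspond to densities $\hat\nu_n$ on $\mathbb{S}^2$ in the conformal class $[g_{round}]$ whose total mass converges to $c_i$. Up to a subsequence, $\hat\nu_n \rightharpoonup^* \hat\nu$ weakly in measure, with $\hat\nu(\mathbb{S}^2) = c_i$; the absence of secondary bubbling at this scale is guaranteed by the choice of $r_n$. Set $\rho := \hat\nu/c_i$, a probability density on $\mathbb{S}^2$. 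Passing to the limit in the eigenvalue equation yields a nontrivial $v$ on $(\mathbb{S}^2, \rho\, g_{round})$ with
$$
-\Delta_{g_{round}} v \;=\; \tilde\Lambda_k(M,[g]) \cdot c_i \cdot \rho \cdot v,
$$
so that $\Lambda := \tilde\Lambda_k(M,[g]) \cdot c_i$ is an eigenvalue of $-\Delta_{g_{round}}$ on $(\mathbb{S}^2, \rho\, g_{round})$.

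Third, the heart of the argument is to upgrade this identification: $\Lambda$ must coincide with $\tilde\Lambda_j(\mathbb{S}^2,[g_{round}])$ for some $j \leq k$. I would argue by contradiction. If $\rho$ were not an extremal density for the $j$-th eigenvalue on $\mathbb{S}^2$, one could locally replace $\mu_n$ on $B(x_i, Rr_n)$ by a rescaled near-maximizing density for $\tilde\Lambda_j(\mathbb{S}^2, [g_{round}])$, preserving the total mass $c_i$ and matching $\mu_n$ on $\partial B(x_i, Rr_n)$. By continuity of $\lambda_k$ with respect to $L^1$-perturbations localized in a small ball, the modified sequence $g_n' \in [g]$ would then satisfy $\lim_n \lambda_k(g_n') > \tilde\Lambda_k(M,[g])$, contradicting the definition of the supremum. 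The bound $j \leq k$ follows from the Courant--Hilbert minimax: at most $k$ linearly independent eigenfunctions of the limiting sphere problem can be produced as weak limits of rescaled restrictions of the first $k$ eigenfunctions $u_{1,n},\ldots,u_{k,n}$ of $(M,g_n)$.

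The main obstacle I anticipate is the local-modification step: one must control $\lambda_k(g_n')$ globally after altering the density on the bubble scale $B(x_i, Rr_n)$, without spectral mass leaking between bubbles or into the regular region. Making this rigorous requires a quantitative separation of scales (bubbles versus regular part) together with an $L^1$-continuity estimate for $\lambda_k$ under compactly supported density perturbations. Once these technicalities are handled, the very same scheme proves \eqref{weightReg}, with the bubble replaced by the regular support of $\mu_r$ and the sphere replaced by $M$ itself.
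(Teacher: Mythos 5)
Your overall strategy---conformal transplant of the bubble to $\mathbb{S}^2$, followed by a contradiction via a localized density replacement that would strictly raise $\lambda_k$---is structurally the same as the paper's. The paper phrases the transplant through a fixed conformal chart $\psi:\Omega\subset\mathbb{S}^2\to M$ and works with the auxiliary Dirichlet eigenvalues $\lambda_m^{x_i}$ on shrinking balls $B(x_i,\epsilon)$ rather than through a rescaling $r_n\to 0$, but the mechanism (isolate the bubble, move it to the sphere, compare with the extremal sphere eigenvalue, and get a contradiction with the maximality of $\tilde\Lambda_k(M,[g])$ if they differ) is the same. You also correctly identify the technical obstacle the paper leaves at a sketch level: an $L^1$-continuity estimate for $\lambda_k$ under compactly supported density perturbations, with no spectral mass leakage between the bubble and the rest.

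The genuine gap is in step 2, where you assert without justification that ``passing to the limit in the eigenvalue equation yields a nontrivial $v$.'' The densities $\nu_n$ carry mass $c_i$ by construction, but the rescaled eigenfunctions $\tilde u_n$ are not controlled by this alone: a priori all $k$ eigenfunctions of $(M,g_n)$ could concentrate away from $x_i$, in which case $\tilde u_n\to 0$ on the blow-up scale and $\Lambda=\tilde\Lambda_k(M,[g])\cdot c_i$ is never exhibited as an eigenvalue of the limit sphere problem. The paper handles exactly this point separately, as Lemma~\ref{part1}: $\tilde\Lambda_k(M,[g])$ must appear in the singular spectrum $\{\lambda_m^{x_i}\}_{m\geq 1}$ at every bubble point. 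Its proof is itself a contradiction argument---if $\tilde\Lambda_k$ did not appear there, one could cut the density down near $x_i$ without perturbing $\lambda_k$, strictly decrease the total area, and renormalize to produce a competitor with $\lambda_k$ strictly larger than $\tilde\Lambda_k(M,[g])$. You need that lemma (or an equivalent nontriviality claim) to enter your step 2; once it is available, your steps 2 and 3 coincide with the paper's proof of Lemma~\ref{weight} via Lemmata~\ref{part1} and~\ref{part2}, and the same scheme yields~\eqref{weightReg} as you note.
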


The proof of Lemma \ref{weight} is done in several steps. We first have

\begin{lemma}\label{part1}
For any $i=1,...,K$, one has 
$$
\tilde \Lambda_k(M,[g]) \in \bigcup_{m=1}^\infty \{ \lambda_m^{x_i} \},
$$
where $\lambda_k^{x_i}$ are defined as above. 
\end{lemma}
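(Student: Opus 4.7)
The plan is to take a maximizing eigenfunction for $\lambda_k(g_n) \to \tilde\Lambda_k(M,[g])$, localize it near $x_i$, and perform a conformal blow-up that turns the concentrating atom $c_i\delta_{x_i}$ into a fixed-scale bubble. The limit eigenvalue problem will live on $\mathbb S^2$ equipped with a metric in the round conformal class, and $\tilde\Lambda_k(M,[g])$ must appear as one of its eigenvalues; by the definition of the singular spectrum this eigenvalue is precisely one of the $\lambda_m^{x_i}$.

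Concretely, fix $i \in \{1,\ldots,K\}$ and let $u_n$ be an $L^2(\mu_n g)$-normalized eigenfunction associated with $\lambda_k(g_n)$. In an isothermal chart around $x_i$ where $g = e^{2\phi}|dx|^2$, I pick a scale $\epsilon_n \to 0$ tuned to the concentration rate of $\mu_n$ at $x_i$, so that the rescaled densities
$$
\tilde\mu_n(y)\,dy := \epsilon_n^2\, e^{2\phi(\epsilon_n y)}\,\mu_n(\epsilon_n y)\,dy
$$
converge weakly to a nontrivial measure of total mass $c_i$ on $\mathbb R^2$. Conformal invariance of the two-dimensional Dirichlet integral ensures that the rescaled functions $\tilde u_n(y) := u_n(\epsilon_n y)$ satisfy $-\Delta_{\mathbb R^2} \tilde u_n = \lambda_k(g_n)\,\tilde\mu_n\,\tilde u_n$ on dilated balls with uniformly bounded Dirichlet energy. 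Standard elliptic compactness then extracts a weak $H^1_{\mathrm{loc}}$ limit $\tilde u \not\equiv 0$ satisfying $-\Delta \tilde u = \tilde\Lambda_k(M,[g])\,\tilde\mu\,\tilde u$, which after stereographic compactification becomes an eigenvalue problem on $\mathbb S^2$ in the round conformal class. By Definition \ref{specsing}, the spectrum of this limit problem is exactly the collection $\{\lambda_m^{x_i}\}_{m \geq 1}$, so $\tilde\Lambda_k(M,[g]) = \lambda_m^{x_i}$ for some $m \geq 1$.

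The principal obstacle is the no-loss-of-mass step: the scale $\epsilon_n$ must be chosen so that $\tilde u \not\equiv 0$ and so that the full atom $c_i$ is absorbed by a single bubble at $x_i$, with no secondary concentration occurring inside the chart at a smaller scale. This will be addressed by the concentration-compactness argument of \cite{nS} and its refinement in \cite{petrides}, already invoked in Section~2 to localize the singular set, combined with the fact that the mass of the atom at $x_i$ is exactly $c_i$ so that the rescaled eigenvalue problem is quantitatively nondegenerate. A secondary technical point is the identification of the limit spectrum with $\{\lambda_m^{x_i}\}_m$: this amounts to checking that Dirichlet eigenfunctions on shrinking balls $B(x_i,\epsilon)$, extended by zero, form an admissible exhaustion for the spectral problem on the limiting bubble, which follows from a routine capacity argument together with the monotonicity already noted in Definition \ref{specsing}.
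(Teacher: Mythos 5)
Your proposal attacks the lemma by a direct blow-up along the concentrating measure, whereas the paper argues variationally by contradiction: assume $\tilde\Lambda_k(M,[g])\notin\bigcup_m\{\lambda_m^{x_i}\}$, deform the approximating metrics $g_n$ by a cutoff $\psi(\delta_n,\epsilon)$ that reduces the conformal factor near $x_i$, observe that $\lambda_k(\tilde g_n)\to\tilde\Lambda_k(M,[g])$ is unchanged precisely because no singular eigenvalue at $x_i$ matches $\tilde\Lambda_k$, and then renormalize the area to extract a conformal metric whose $k$-th eigenvalue exceeds the supremum. Your route is therefore genuinely different in spirit: it is a local PDE/concentration-compactness argument rather than a global metric-perturbation argument.

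The difficulty is that, as written, your argument does not close and cannot close without substantially more input. The lemma asserts the conclusion for \emph{every} $i\in\{1,\dots,K\}$. Your blow-up produces a nontrivial limit $\tilde u$ only if the eigenfunctions $u_n$ for $\lambda_k(g_n)$ actually carry a definite fraction of their $L^2(\mu_n g)$ mass at the blow-up scale around $x_i$. Nothing forces this: the measure $\mu_n$ concentrates at $x_i$ by hypothesis, but the eigenfunctions can perfectly well concentrate on the regular part or on a different bubble $x_j$, in which case the rescaled $\tilde u_n$ tend to zero locally and your ``weak $H^1_{\mathrm{loc}}$ limit $\tilde u\not\equiv 0$'' fails. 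You flag this as ``the principal obstacle'' and defer it to the concentration-compactness machinery of \cite{nS} and \cite{petrides}, but that machinery controls where the \emph{measure} goes, not where the eigenfunctions go, and these are distinct questions; indeed, a priori an eigenfunction for $\lambda_k$ may vanish on a whole neighborhood of a given bubble. To make your approach work at a particular $x_i$ you would need to show, for that $i$, that some function in the $\lambda_k$-eigenspace genuinely concentrates there, and this is essentially equivalent to the conclusion of the lemma itself, so the argument is circular unless the non-vanishing of $\tilde u$ is proved independently. This is precisely why the paper does not look at eigenfunctions at all: the contradiction argument exploits the definition of $\tilde\Lambda_k(M,[g])$ as a supremum and only needs a one-sided comparison, which is obtained by shaving mass off the bubble and renormalizing.

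A secondary but real gap is your identification of the spectrum of the blown-up limit problem with $\{\lambda_m^{x_i}\}_m$. The quantities $\lambda_m^{x_i}$ in Definition \ref{specsing} are defined as limits of Dirichlet eigenvalues on shrinking balls $B(x_i,\epsilon)$, not as eigenvalues of an eigenvalue problem on $\mathbb S^2$ with the blown-up density $\tilde\mu$. These objects are related, but equating them requires (i) a capacity argument to pass from the punctured plane / compactified sphere to Dirichlet conditions on a shrinking ball, and (ii) an argument that the limiting blown-up measure $\tilde\mu$ yields the same spectrum as the intrinsically defined $\lambda_m^{x_i}$. Calling this ``routine'' skips the step where the lemma's specific spectral set actually enters. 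In the paper's proof this identification is not needed, because the contradiction is derived directly from the behavior of $\lambda_k$ under the cutoff, using only the definition of $\lambda_m^{x_i}$ as the value that governs the response of the spectrum to local mass removal at $x_i$.
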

\begin{proof}
Assume the contrary. Then there exists $i$ such that 
$$\tilde \Lambda_k(M,[g]) \notin  \bigcup_{m=1}^\infty \{ \lambda_m^{x_i} \}. $$
We then modify the metric in the following way: consider a smooth cut-off function for $\delta,\epsilon >0$ defined by
\begin{equation}
\psi(\delta,\epsilon)=
\left \{ 
\begin{array}{c}
1\,\,\,\mbox{on}\,\,M \backslash B(x_i,2\delta),\\
1-\epsilon\,\,\,\mbox{on}\,\,B(x_i,\delta),
\end{array} \right . 
\end{equation}
Then for sufficiently small $\epsilon$, there exists a sequence $\delta_n \to 0$ such that if we denote $\tilde g_n =\psi(\delta_n,\epsilon) g_n$ where $g_n$ is the metric constructed in the existence part of Theorem \ref{main1}, one has 
$$
\lim_{n \to \infty} \lambda_k(\tilde g_n)=\tilde \Lambda_k(M,[g]).
$$
By properly choosing the sequence $\delta_n$, one has 
$$
A_{\tilde g_n}(M) \leq \frac12 c_i \epsilon. 
$$
Therefore, the normalized metric 
$$
h_n= \frac{\tilde g_n}{A_{\tilde g_n}(M)} \in [g_n]
$$
is such that 
$$
\lim_{n \to \infty} \lambda_k(h_n)>\tilde \Lambda_k(M,[g]),
$$
hence contradicting the definition of $\tilde \Lambda_k(M,[g])$. 

\end{proof}

One has similarly 
\begin{lemma}\label{part2}
For any $i=1,...,K$, one has 
$$
\tilde \Lambda_k(M,[g]) \in \Lambda_r. 
$$
\end{lemma}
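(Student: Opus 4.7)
The plan is to mimic the proof of Lemma \ref{part1}, now perturbing on a region of $M$ where the regular density $\mu_r$ is active rather than around a bubble point. I argue by contradiction. Assume $\tilde\Lambda_k(M,[g])\notin\Lambda_r$; since $\Lambda_r$ is discrete, this gives a positive gap between $\tilde\Lambda_k(M,[g])$ and every element of $\Lambda_r$. The statement is vacuous if $\mu_r\equiv 0$, so I assume $\mu_r$ is not identically zero.

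Pick $y\in M\setminus\{x_1,\ldots,x_K\}$ and $\delta_0>0$ such that $\mu_r$ is bounded below by a positive constant on $B(y,\delta_0)$ and $B(y,2\delta_0)$ is disjoint from all the $x_i$; this is possible by Theorem \ref{main1}, which forces $\mu_r$ to be smooth and positive outside a finite set. Mirroring the cut-off of Lemma \ref{part1}, set
\begin{equation*}
\psi(\delta,\epsilon)=
\left\{
\begin{array}{cl}
1 & \text{on } M\setminus B(y,2\delta),\\
1-\epsilon & \text{on } B(y,\delta),
\end{array}
\right.
\end{equation*}
with a smooth interpolation on the annulus, and form $\tilde g_n=\psi(\delta_n,\epsilon)\,g_n$ for $\delta_n\to 0$ chosen so that the subtracted mass tracks the regular density. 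Let $h_n=\tilde g_n/A_{\tilde g_n}(M)\in[g]$ be the renormalization.

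Because the cut-off is supported away from every bubble, the singular spectrum $\Lambda_s$ is preserved in the limit, and the only way $\lambda_k(\tilde g_n)$ could drift away from $\tilde\Lambda_k(M,[g])$ is by colliding with a nearby regular eigenvalue; the contradiction hypothesis rules this out, so a continuity argument forces $\lim_{n\to\infty}\lambda_k(\tilde g_n)=\tilde\Lambda_k(M,[g])$. Since the cut-off reduces the area by a positive amount controlled by $\epsilon\int_{B(y,\delta_n)}\mu_r\,dA_g$, the renormalization step, carried out exactly as in the last paragraph of the proof of Lemma \ref{part1}, then yields $\lim_{n\to\infty}\lambda_k(h_n)>\tilde\Lambda_k(M,[g])$, contradicting the definition of the supremum.

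The main obstacle, exactly as in Lemma \ref{part1}, is the spectral-stability claim $\lim_{n\to\infty}\lambda_k(\tilde g_n)=\tilde\Lambda_k(M,[g])$. It relies on the decomposition $\mathrm{Spec}(-\Delta)=\Lambda_r\cup\Lambda_s$ from Definitions \ref{specreg}--\ref{specsing} together with the regularity theory of Section 2 that controls how eigenvalues of $(M,\mu_r g)$ respond to a small conformal perturbation; the hypothesis $\tilde\Lambda_k(M,[g])\notin\Lambda_r$ is precisely what prevents $\lambda_k(\tilde g_n)$ from being absorbed by a nearby regular eigenvalue. Once this convergence is established, the remainder of the argument is formally parallel to Lemma \ref{part1} with the roles of $\mu_r$ and $\mu_s$ exchanged.
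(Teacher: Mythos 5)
The paper offers no written proof of Lemma \ref{part2} — it only says ``one has similarly'' — so your task was to reconstruct the analogue of Lemma \ref{part1}.  Your overall strategy (perturb the density on a region where $\mu_r$ is active, use the contradiction hypothesis $\tilde\Lambda_k(M,[g])\notin\Lambda_r$ to keep $\lambda_k$ anchored at $\tilde\Lambda_k(M,[g])$, then renormalize to beat the supremum) is the right transposition of the paper's argument, and the remark that the claim is really conditional on $\mu_r\not\equiv 0$ is sensible and consistent with the paper's observation that $\Lambda_r$ is empty in that case.

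There is, however, a genuine gap in the way you set up the cut-off, and it is precisely where the analogy with Lemma \ref{part1} breaks.  In Lemma \ref{part1} one can afford to send $\delta_n\to 0$ because the singular part of the measure near $x_i$ is a Dirac mass of weight $c_i$: even as the ball shrinks, the amount of mass that the factor $1-\epsilon$ removes stays bounded below by a quantity of order $\epsilon c_i$, so the renormalization $h_n=\tilde g_n/A_{\tilde g_n}(M)$ produces a definite, $n$-independent change in area and hence a quantitative gain.  In your version the point $y$ is chosen in the support of the \emph{regular} density $\mu_r$, which is absolutely continuous with respect to $dA_g$.  Then
$$
A_{g_n}(M)-A_{\tilde g_n}(M)\;\sim\;\epsilon\int_{B(y,\delta_n)}\mu_n\,dA_g\;\longrightarrow\;\epsilon\int_{B(y,\delta_n)}\mu_r\,dA_g\;\longrightarrow\;0
$$
as $\delta_n\to 0$, because $y$ is not a bubble point and $\mu_r\in L^1$.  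So with $\delta_n\to 0$ the renormalization step yields $A_{\tilde g_n}(M)\to 1$ and no gain over $\tilde\Lambda_k(M,[g])$; the contradiction evaporates.  The phrase ``chosen so that the subtracted mass tracks the regular density'' does not fix this, since there is nothing to track at a shrinking scale.

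To make the argument close you must keep the perturbation region of fixed positive measure inside $\{\mu_r>0\}$ and away from every $x_i$ (a fixed ball, or even all of $M\setminus\bigcup_i B(x_i,2\delta)$ with $\delta$ fixed), which is the correct mirror of the Dirac situation: the singular part is concentrated, so one localizes in a shrinking ball; the regular part is spread out, so one must perturb on a macroscopic set.  With a fixed perturbation region the spectral-stability step also needs a cleaner statement: one should invoke the splitting $\mathrm{Spec}(-\Delta)=\Lambda_r\cup\Lambda_s$ and the discreteness of $\Lambda_r$ to get a spectral gap of some size $\eta>0$ around $\tilde\Lambda_k(M,[g])$, and then choose $\epsilon$ small in terms of $\eta$ so that no regular eigenvalue crosses $\tilde\Lambda_k(M,[g])$ under the conformal rescaling of $\mu_r$.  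Your ``continuity argument'' gestures at this but should be pinned down with the gap; otherwise the claim that $\lambda_k(\tilde g_n)\to\tilde\Lambda_k(M,[g])$ is unsupported.
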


We are then in position to prove Lemma \ref{weight}. 

\subsection*{Proof of Lemma \ref{weight}} 
Assume by contradiction that there exists an index $i$ such that 
\begin{equation}
c_i \notin \bigcup_{j=1}^{k-1} \Big \{\frac{\Lambda_j(\mathbb S^2,[g_{round}])}{\tilde \Lambda_k(M, [g])} \Big \}. 
\end{equation}

By Lemmata \ref{part1} and \ref{part2}, there exists an $m \geq 1$ such that 
$$
\lambda_m^{x_i}=\tilde \Lambda_k(M,[g])
$$

For $\epsilon >0$ and $i=1,...,K$, we introduce a sequence of metrics on $\mathbb S^2$. Let $\Omega \subset \mathbb S^2$ be an open domain and $\psi: \Omega \to M$ a conformal map such that $x_i \in \psi(\Omega)$. Set 
$$
\tilde g^{\epsilon,i}_n=\psi^*g'_n|_{B(x_i,\epsilon)},
$$
such that $A_{\tilde g^{\epsilon,i}_n} \to c_i$. 
If $\lambda_j^{\epsilon,n,i}$ denote the eigenvalue of the Laplace-Beltrami operator on $(\mathbb S^2,\tilde g^{\epsilon, i}_n)$ then one can define the following limits: 
$$
\tilde \lambda_j^{\epsilon,i}= \lim_{n \to \infty}\lambda_j^{\epsilon,n,i},
$$ 
$$
\tilde \lambda_j^{i}= \lim_{\epsilon \to 0}\tilde \lambda_j^{\epsilon,i}
$$
we prove
$$
\tilde \lambda_m^{i}=\lambda^{x_i}_m=\frac{\Lambda_m(\mathbb S^2,[g_{round}])}{c_i}. 
$$
Assume not, i.e.,
$$
\lambda^{x_i}_m\neq\frac{\Lambda_m(\mathbb S^2,[g_{round}])}{c_i}
$$
Then we can modify metrics $\tilde g^{\epsilon,i}_n$ into the metrics $\tilde{\tilde g}^{\epsilon,i}_n$ on the sphere
having the same area such that the area of $\mathbb S^2\setminus \Omega$ in the new metrics tends to $0$ and the corresponding eigenvalues $\Lambda_j^{\epsilon,n,i}$ satisfy
$$
\lim_{\frac1n, \epsilon \to 0}\Lambda_j^{\epsilon,n,i}>\tilde \lambda_m^{i}=\lambda^{x_i}_m
$$
One can modify the sequence $g_n$, the one obtained in the existence part of Theorem \ref{main1}, in a neighborhood of $x_i$ into a metric $g'_n$ by transplanting the problem to the sphere $\mathbb S^2$ in the following way.  We set on $\Omega$
$$
\tilde{\tilde g}^{\epsilon,i}_n=\psi^*g'_n|_{B(x_i,\epsilon)},
$$
Then we have 
$$
\lim_{n \to \infty} \lambda_k(g'_n)>\tilde \Lambda_k(M,[g]). 
$$
On the other hand, 
$$
\tilde \Lambda_k (M,[g]) \notin \bigcup_{m=1}^\infty \left \{ \lambda_m^{x_i} \right \},
$$
hence a contradiction. This finishes the proof of Theorem \ref{main1}.

\bibliographystyle{alpha}
\bibliography{biblio}

\begin{thebibliography}{GNS15}

\bibitem[GNS15]{GNS}
A.~Grigor'yan, N.~Nadirashvili, and Y.~Sire.
\newblock A lower bound for the number of negative eigenvalues of
  schr\"{o}dinger operators.
\newblock {\em To appear Journal of Diff. Geom}, 2015.

\bibitem[Kor93]{korevaar}
Nicholas Korevaar.
\newblock Upper bounds for eigenvalues of conformal metrics.
\newblock {\em J. Differential Geom.}, 37(1):73--93, 1993.

\bibitem[LL01]{lieb}
Elliott~H. Lieb and Michael Loss.
\newblock {\em Analysis}, volume~14 of {\em Graduate Studies in Mathematics}.
\newblock American Mathematical Society, Providence, RI, second edition, 2001.

\bibitem[Nad02]{nadirSphere}
Nikolai Nadirashvili.
\newblock Isoperimetric inequality for the second eigenvalue of a sphere.
\newblock {\em J. Differential Geom.}, 61(2):335--340, 2002.

\bibitem[NS]{nS}
Nikolai Nadirashvili and Yannick Sire.
\newblock Conformal spectrum and harmonic maps.
\newblock {\em Moscow J. of maths}.

\bibitem[Pet14]{petrides}
Romain Petrides.
\newblock Existence and regularity of maximal metrics for the first {L}aplace
  eigenvalue on surfaces.
\newblock {\em Geom. Funct. Anal.}, 24(4):1336--1376, 2014.

\bibitem[YY80]{YangYau}
Paul~C. Yang and Shing~Tung Yau.
\newblock Eigenvalues of the {L}aplacian of compact {R}iemann surfaces and
  minimal submanifolds.
\newblock {\em Ann. Scuola Norm. Sup. Pisa Cl. Sci. (4)}, 7(1):55--63, 1980.

\end{thebibliography}

\medskip

{\em NN} -- 
CNRS, I2M UMR 7353-- Centre de Math\'ematiques et Informatique, Marseille, France. 
 
{\tt nicolas@cmi.univ-mrs.fr}

\medskip

{\em YS} --  
Universit\'e Aix-Marseille, I2M UMR 7353-- Centre de Math\'ematiques et Informatique, Marseille, France. 

{\tt yannick.sire@univ-amu.fr} 

\end{document}